\documentclass[10 pt]{article}
\usepackage[margin=1in]{geometry}

\usepackage{geometry}
\usepackage{graphicx}
\usepackage[ruled]{algorithm2e}
\usepackage{tikz}
\usetikzlibrary{shapes,shadows,arrows}
\usepackage{amsmath,amsfonts,amssymb,amscd,amsthm,xspace}
\usepackage{amsmath, color}
\theoremstyle{plain}
\newtheorem{example}{Example}[section]
\newtheorem{theorem}{Theorem}[section]

\newtheorem{lemma}{Lemma}[section]
\newtheorem{proposition}{Proposition}[section]

\theoremstyle{definition}
\newtheorem{definition}{Definition}[section]
\theoremstyle{remark}

\begin{document}
\title{\itshape A Novel Approach for Parameter and Differentiation Order Estimation for a Space Fractional Advection Dispersion Equation }
\author{{\small Abeer Aldoghaither$^1$, Taous-Meriem Laleg-Kirati$^1$ and Da-Yan Liu$^2$ }\\  {\small E-mail: abeer.aldoghaither@kaust.edu.sa, taousmeriem.laleg@kaust.edu.sa and dayan.liu@insa-cvl.fr  } \\ {\small $^1$ Mathematical and Computational Sciences and Engineering Division, King
Abdullah University of Science} \\ {\small and Technology (KAUST), Thuwal, Kingdom of
Saudi Arabia} \\{\small $^2$INSA Centre Val de Loire, Universit\'{e} d'Orl\'{e}ans, PRISME EA 4229, Bourges, France}}
\date{}

\maketitle

\begin{abstract}
In this paper, we propose a new approach, based on the so-called modulating functions to estimate the average velocity, the dispersion coefficient and the differentiation order in a space fractional advection dispersion equation. First, the average velocity and the dispersion coefficient are estimated by applying the modulating functions method, where the problem is transferred into solving a system of algebraic equations. Then, the modulating functions method combined with Newton's method is applied to estimate all three parameters  simultaneously. Numerical results are presented with noisy measurements to show the effectiveness and the robustness of the proposed method.

\subsubsection*{keywords} space fractional advection dispersion equation; modulating functions; parameter estimation 

\end{abstract}

\section{Introduction}
Fractional calculus has proven its efficiency in modeling many physical phenomena,  due to its memory and hereditary properties \cite{Fomin}, \cite{Schumer2}. It has been shown that it is appropriate to use fractional models to describe anomalous diffusion process, such as contaminants transport in soil, oil flow in porous media,  and groundwater flow  \cite{Wei, Schumer1,Xiong}, since it can capture some important features of particles transport, such as particles with velocity variation and long-rest periods \cite{Schumer2}.

In this paper, we are interested in identifying  the average velocity, the dispersion coefficient and the differentiation order for a space fractional advection dispersion equation using the measurements  of the concentration and the flux at final time. This is an inverse problem, where we use measurable data  to observe some properties of the structure of a physical system. Since the space fractional advection dispersion equation is usually used to model underground water transport in heterogeneous porous media, identifying parameters for such an equation is important  to understand how chemical or biological contaminates are transported throughout surface aquifer system \cite{Schumer1}. For instance, an estimate of the differentiation order in a ground water contaminant transport model can provide information about soil properties, such as the heterogeneous of the media \cite{Schumer2}.



Estimating coefficients for fractional differential equations is not a trivial problem. Moreover, the problem becomes more challenging when it involves the identification of the differentiation order, where usually using standard optimization approaches fails. Some progresses have been made on the parameter identification for the fractional diffusion equation \cite{SakamotoI}, \cite{ChengU}, \cite{JinA}, \cite{BondarenkoG}, \cite{MillerC}, while the parameter identification problem for the space fractional advection dispersion equation has not been paid much attention. Zhang \textit{et al.} \cite{Zhang}, considered an inverse problem based on optimization for simultaneous identification of multi parameters in a space fractional advection dispersion equation. This inverse problem has been solved numerically using the optimal perturbation regularization algorithm introduced by Chi \textit{et al.}  \cite{Chi}. However,  the sensibility of the algorithm depends on the regularization parameter, the numerical differential step, and the initial iteration. In their study, they considered the  Tikhonov regularization to treat the ill posedness of the problem. In spite of the importance, to the best of our knowledge, except \cite{Zhang} there is no published work on the parameter identification problem for the space fractional advection dispersion equation.

In our study, we propose a novel approach, where the modulating functions method is combined with the first order Newton's method to estimate all three parameters simultaneously.

The modulating functions method was first suggested in 1954 by Shinbrot \cite{ShinbrotO}, then it has been used in many applications, such as  signal processing and control theory (see, e.g. \cite{m7}, \cite{m8}, \cite{SadabadiS}, \cite{FedeleA},  \cite{Liu2011d}, \cite{LiuI}).
For instance, Fedele  \textit{et al.} \cite{FedeleA}, presented a recursive frequency estimation scheme based on trigonometric and spline-type modulating functions. Janiczek \cite{JaniczekG},  generalized the modulating functions method to the fractional differential equations, where he aimed to reduce the fractional order to an integer order in noise free case. Liu \textit{et al.} \cite{LiuI}, applied the modulating functions method to identify unknown parameters for a class of fractional order linear systems. Furthermore, Sadabadi \textit{et al.}  \cite{SadabadiS}, estimated the parameters for a two dimensional continuous-time system, based on a two dimensional modulating functions approach. However, the method was not generalized to fractional partial differential equations.

The main goal of this paper is to introduce a new effective  modulating functions based approach which estimates simultaneously the coefficients and the differentiation order for a space fractional advection dispersion equation.
The proposed approach has several  advantages. Firstly, the problem is transferred into solving a system of algebraic equations. Then, the estimations of the unknown parameters can be exactly given by integral formula. Secondly, initial values, which are usually unknown in most real life applications, are not required in this approach. Thirdly, instead of  computing the fractional derivative of the solution of the partial differential equation, the fractional derivatives of the modulating functions are computed, which can be simpler, if the modulating functions are well-chosen. Fourthly, a regularization technique is not needed since the proposed estimations involving integral formula are robust and can help to reduce the effect of noise (see \cite{Liu2011d}, \cite{LiuI}). Furthermore, the proposed approach simplifies the optimization problem, where the problem is only optimized with respect to one parameter, instead of all parameters.

The paper is organized as follows: in Section 2, we introduce the considered problem and some primary definitions. In Section 3, we apply the modulating functions method to the space fractional advection dispersion equation, where the average velocity and the dispersion coefficient are estimated by solving a system of algebraic equations. Then, the modulating functions method is combined with the first order Newton's method to estimate all three parameters simultaneously, in Section 4. In Section 5, we give some numerical results  to show the efficiency and robustness of the method. A discussion is given  in Section 6. Finally, a conclusion summarizes the obtained results.

\section{Preliminaries}
In this section, we recall the definition of the so-called modulating functions and  some useful properties.

\begin{definition}(\cite{podl} p. 62) The  $\alpha^{th}$ order Riemann-Liouville fractional derivative of a continuous function $f$ defined on $\mathbb{R}$, with $\alpha \in \mathbb{R}$, is defined as follows: $\forall t \in  \mathbb{R} $, $ n \in \mathbb{N}^*$,
				\begin{eqnarray}\label{R_L_left}
					\begin{array}{lll}
						 D^\alpha_x f(x) = \displaystyle\frac{1}{\Gamma{(n-\alpha)}} \frac{d^n}{dt^n}  \int_0^x  (x-\tau) ^{n-\alpha-1} f(\tau)  d \tau, &   n-1\le \alpha < n.
					\end{array}
				\end{eqnarray}
\end{definition}

\begin{definition}\cite{Preisg}
A function $\phi(x) \in C^n$, defined over the interval $[a,b]$, is called a modulating function of order $k$  with $k \in N^*$  if:
\begin{eqnarray}\label{condition}
\begin{array}{cc}
\phi^{(i)}(a)= \phi^{(i)}(b) = 0,   & i=0,1,\dots,k-1.
\end{array}
\end{eqnarray}
\end{definition}
The following lemma describes a useful generalized integration by parts formula. This lemma was obtained by applying the convolution theorem of the Laplace transform \cite{LiuI}.

\begin{lemma}\label{lemma1}
\cite{LiuI}
If the $\alpha^{th}$ order Riemann-Liouville derivative of f exists where $n-1 \le \alpha < n$, and g is an $n^{th}$ order  modulating function defined on $[0,L]$. Then, we have:
\begin{eqnarray}\label{propo1}
\int_0^L g(L-x) D_x^\alpha f(x) d x =  \int_0^L D_x^\alpha g(x) f(L-x) d x.
\end{eqnarray}
\end{lemma}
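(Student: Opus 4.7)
The plan is to start from the definition of the Riemann-Liouville derivative, push all $n$ classical derivatives off of $f$'s fractional integral and onto $g(L-x)$ by repeated integration by parts, then swap the order of integration and change variables so the result re-assembles as $D^\alpha g$ acting on $g$. Concretely, I would first write
\[
\int_0^L g(L-x) D_x^\alpha f(x)\,dx = \frac{1}{\Gamma(n-\alpha)} \int_0^L g(L-x) \frac{d^n}{dx^n}\!\left[\int_0^x (x-\tau)^{n-\alpha-1} f(\tau)\,d\tau\right] dx,
\]
and then integrate by parts $n$ times with respect to $x$. Setting $u(x) = g(L-x)$, one has $u^{(i)}(0) = (-1)^i g^{(i)}(L) = 0$ and $u^{(i)}(L) = (-1)^i g^{(i)}(0) = 0$ for $0 \le i \le n-1$ by the modulating-function condition (\ref{condition}); hence every boundary term vanishes. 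The two sign factors $(-1)^n$ (from the integrations by parts and from the chain rule on $g(L-\cdot)$) cancel, leaving $g^{(n)}(L-x)$ against $\int_0^x (x-\tau)^{n-\alpha-1} f(\tau)\,d\tau$.

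Next I would apply Fubini's theorem on the triangle $\{0 \le \tau \le x \le L\}$ and perform the substitutions $y = L - x$ and $s = L - \tau$, which rearranges the expression into
\[
\int_0^L f(L-s)\left[\frac{1}{\Gamma(n-\alpha)} \int_0^s (s-y)^{n-\alpha-1} g^{(n)}(y)\,dy\right] ds.
\]
The bracketed quantity is the Caputo fractional derivative of $g$ at $s$, so the remaining task is to identify it with the Riemann-Liouville derivative $D^\alpha g(s)$ from Definition 2.1.

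This identification is the main obstacle, and it is precisely where the modulating-function hypothesis at the left endpoint is used a second time. I would establish it either by citing (and briefly justifying) the commutation identity $D^n I^{n-\alpha} g = I^{n-\alpha} D^n g$, which holds provided $g(0) = g'(0) = \cdots = g^{(n-1)}(0) = 0$, or by a direct computation: apply $n$ integrations by parts inside $D^\alpha g(s) = \frac{1}{\Gamma(n-\alpha)} \frac{d^n}{ds^n}\int_0^s (s-y)^{n-\alpha-1} g(y)\,dy$, moving derivatives from the kernel onto $g$. The boundary contributions at $y = 0$ vanish by (\ref{condition}), and those at $y = s$ either vanish outright (when $n-\alpha-1-k > 0$) or are absorbed by the outer $d^n/ds^n$ before integration, after which differentiation under the integral produces exactly the Caputo form. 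Once this equality is in hand, the integral reads $\int_0^L f(L-s) D_s^\alpha g(s)\,ds$, which after renaming $s \mapsto x$ is the right-hand side of (\ref{propo1}).

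As an aside, the Laplace-transform derivation mentioned after the lemma statement is an alternative route: one recognizes both sides as the convolutions $(g * D^\alpha f)(L)$ and $(D^\alpha g * f)(L)$, and the modulating-function conditions at $0$ kill the boundary terms in $\mathcal{L}\{D^\alpha g\}$ so that $\mathcal{L}\{D^\alpha g\}(s) = s^\alpha \hat g(s)$; a parallel argument, localized to the endpoint $L$ using the vanishing of $g$ and its derivatives there, handles the boundary terms arising from $\mathcal{L}\{D^\alpha f\}$. I expect the direct Fubini approach above to be cleaner to present, with the Caputo/Riemann-Liouville reconciliation being the single nontrivial technical step.
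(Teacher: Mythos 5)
Your proposal is correct, but it follows a genuinely different route from the paper: the paper does not prove Lemma~\ref{lemma1} at all, it cites \cite{LiuI} and notes only that the identity is obtained from the convolution theorem of the Laplace transform, whereas you give a direct, self-contained real-variable argument. Your chain of steps checks out: the $n$-fold integration by parts is legitimate because with $u(x)=g(L-x)$ the boundary terms involve $u^{(i)}(0)=(-1)^i g^{(i)}(L)$ and $u^{(i)}(L)=(-1)^i g^{(i)}(0)$ for $i=0,\dots,n-1$, all of which vanish by (\ref{condition}) with $k=n$; the two factors $(-1)^n$ indeed cancel; the Fubini step on $\{0\le\tau\le x\le L\}$ and the substitutions $y=L-x$, $s=L-\tau$ produce the Caputo-type expression $\frac{1}{\Gamma(n-\alpha)}\int_0^s (s-y)^{n-\alpha-1}g^{(n)}(y)\,dy$; and the identification of this with the Riemann--Liouville derivative $D_s^\alpha g(s)$ is exactly the standard Caputo/Riemann--Liouville coincidence under $g(0)=\dots=g^{(n-1)}(0)=0$, which the left-endpoint half of the modulating-function condition supplies. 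What your approach buys is transparency and self-containedness: it makes explicit that the right-endpoint conditions on $g$ kill the integration-by-parts boundary terms while the left-endpoint conditions are what let the emerging Caputo form be rewritten as the Riemann--Liouville derivative, and it avoids any appeal to Laplace transformability of $f$ and $g$ on $[0,\infty)$. The paper's (cited) convolution-theorem route is shorter at the formal level, since in the transform domain the identity reduces to the commutativity of multiplication by $s^{\alpha}$ once the initial-value terms are annihilated, but it hides these regularity and boundary considerations; your aside summarizing that alternative is essentially accurate. The only point to state carefully in a final write-up is the mild regularity needed so that the lower-order derivatives of the fractional integral $I^{n-\alpha}f$ are finite at the endpoints (immediate here since $f$ is continuous), and a one-line justification of the Caputo/Riemann--Liouville step, e.g.\ $D_x^{\alpha}g(x)=\frac{1}{\Gamma(n-\alpha)}\int_0^x (x-y)^{n-\alpha-1}g^{(n)}(y)\,dy+\sum_{k=0}^{n-1}\frac{g^{(k)}(0)}{\Gamma(k+1-\alpha)}x^{k-\alpha}$, with the sum vanishing by (\ref{condition}).
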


\section{Problem Statement}
We consider the following space fractional advection dispersion equation with initial and Dirichlet boundary conditions: for any $0 < x < L$ and $t>0$,
\begin{equation}\label{FADE}
		\left\{
			\begin{array}{ll l}  	
					\dfrac{\partial c(x,t)}{\partial t} &=& -\nu \dfrac{\partial c(x,t)}{\partial x} + d \dfrac{\partial ^ \alpha c(x,t) }{\partial x^\alpha} + r(x,t),   \\		
					c(x,0)&=&g_0(x),    \\
     					c(0,t)&=&0, \\
     					c(L,t)&=&0, \\
			\end{array}
		\right.
\end{equation}
where $c$ is the solute concentration, $\nu$ is the average velocity, $d$ the is dispersion coefficient, $r$ is the source term, $\alpha$ is the differentiation order for the space derivative with $1 < \alpha \le 2 $. We assume that $\nu$ and $d$ are constants and the fractional derivative is a Riemann-Liouville  derivative of order $\alpha$.

The inverse problem falls in the category of parameter identification. We would like to estimate the unknown parameters $d, \nu$, and $\alpha$. We suppose that the concentration $c$ and the flux $\frac{\partial c}{\partial t}$ are unknown except at a final time $t=T$, where we can measure them:
\begin{eqnarray}\label{prior1}
			\begin{array}{lll}
				c(x,T)=g_1(x), &\displaystyle\frac{\partial c(x,T)}{\partial t}=g_2(x),&  0 < x < L. \\
			\end{array}
	\end{eqnarray}
	
	\section{Modulating Functions Method for estimating the average velocity and the dispersion coefficient}
In this section, we present our first result, where the modulating functions method is applied to estimate  the average velocity and the dispersion coefficient by assuming that the differentiation order is known.

\begin{theorem}\label{theorem1}
Let $\{\phi_n(x)\}_{n=1}^N$  be a set of $n^{th}$ order modulating functions defined on the interval $[0,L_1]$ where $2 \leq N$ and $L_1 \leq L$, then the solution of the following linear system gives the estimations  of the parameters  $\nu$ and $d$:
 \begin{eqnarray}\label{sys1}
 \begin{pmatrix}
  A_1		&	B_1  \\

A_2 &  B_2\\
\vdots & \vdots \\
A_N & B_N

 \end{pmatrix}
 \begin{pmatrix}
 -\nu	  \\
  \\
d \\
 \end{pmatrix} =
  \begin{pmatrix}
 C_1  \\
C_2\\
\vdots
\\
C_N
 \end{pmatrix},
\end{eqnarray}
where
\begin{eqnarray}
A_n = \int_0^{L_1}  \frac{\partial \phi_n(L_1-x)}{\partial x}   c(x,t) d x,
\end{eqnarray}
\begin{eqnarray}
B_n=   \int_0^{L_1} \dfrac{\partial ^ \alpha \phi_n(x)  }{\partial x^\alpha} c(L_1-x,t) d x,
\end{eqnarray}
\begin{eqnarray}
C_n = \int_0^{L_1} \phi_n(L_1-x) \dfrac{\partial c(x,t)}{\partial t}  -  r(x,t) \phi_n(L_1-x) d x.
\end{eqnarray}

\end{theorem}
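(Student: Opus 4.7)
The plan is to derive each row of the linear system by testing the PDE against one modulating function and using integration by parts to transfer every $x$-derivative off the unknown concentration and onto the analytically chosen $\phi_n$. Because $c$ and $\partial_t c$ are only available at $t=T$ via $g_1$ and $g_2$, while $r$ is known, the right-hand side of the resulting identity will be computable purely from given data, and no differentiation of noisy measurements is required.

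First, I would fix $n\in\{1,\dots,N\}$, evaluate (\ref{FADE}) at $t=T$, multiply by $\phi_n(L_1-x)$, and integrate over $[0,L_1]$. Moving the source integral together with the time-derivative integral to one side produces exactly $C_n$. For the advection term I would integrate by parts once; the boundary contribution $[\phi_n(L_1-x)\,c(x,T)]_0^{L_1}$ vanishes since a modulating function of order $n\ge 2$ satisfies $\phi_n(0)=\phi_n(L_1)=0$, and the remaining interior integral matches $A_n$ under the theorem's convention that $\partial \phi_n(L_1-x)/\partial x$ denotes the derivative of $\phi_n$ evaluated at the shifted argument. Multiplying by $-\nu$ reproduces the first entry of the $n$th row.

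For the fractional term I would invoke Lemma \ref{lemma1} directly with $g=\phi_n$, $f=c(\cdot,T)$ and $L=L_1$: the hypothesis $1<\alpha\le 2$ forces the Riemann--Liouville integer index in Definition 2.1 to be $2$, and the assumption that each $\phi_n$ is a modulating function of order at least $2$ gives the vanishing of $\phi_n$ and $\phi_n'$ at both endpoints required by the lemma's convolution-based proof. This step yields $d\,B_n$. Combining the three pieces gives the scalar identity $-\nu A_n + d B_n = C_n$, and stacking it over $n=1,\dots,N$ is exactly the matrix equation (\ref{sys1}).

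The main obstacle is really the careful bookkeeping of boundary terms and sign conventions: verifying that the order-$n$ modulating-function condition annihilates every boundary term arising both in the ordinary integration by parts on the advection term and in the hypothesis of Lemma \ref{lemma1} on the fractional term. Once this is checked, the theorem reduces to the routine algebraic rearrangement just described, and the condition $N\ge 2$ ensures that the linear system admits (for generically chosen $\{\phi_n\}$) a unique solution for the two unknowns $\nu$ and $d$, or a least-squares solution when $N>2$.
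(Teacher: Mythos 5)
Your proposal is correct and follows essentially the same route as the paper: multiply the equation by $\phi_n(L_1-x)$, integrate over $[0,L_1]$, use ordinary integration by parts for the advection term and Lemma \ref{lemma1} for the fractional term, with the modulating-function conditions killing the boundary terms, then stack the resulting scalar identities into the linear system (\ref{sys1}). Your extra remarks on evaluating at $t=T$, the sign convention for $\partial\phi_n(L_1-x)/\partial x$, and the least-squares interpretation when $N>2$ are sound refinements of the same argument.
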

\begin{proof}

\underline{Step 1:}
Multiply (\ref{FADE}) by the modulating  functions $\phi_n(L_1-x)$  for $n=1,\dots,N$, then we get:
\begin{equation}
\begin{split}
& \phi_n(L_1-x) \dfrac{\partial c(x,t)}{\partial t} = \\ &\qquad \qquad - \nu \dfrac{\partial c(x,t)}{\partial x} \phi_n(L_1-x) +  d  \dfrac{\partial ^ \alpha c(x,t) }{\partial x^\alpha} \phi_n(L_1-x) +  r(x,t) \phi_n(L_1-x).
\end{split}
\end{equation}
\underline{Step 2:}
Integrating over the interval $[0,L_1]$, gives us:
\begin{equation}\label{inte_part} 
\begin{split}
  &\nu \int_0^{L_1} - \dfrac{\partial c(x,t)}{\partial x}  \phi_n(L_1-x) d x + d \int_0^{L_1} \dfrac{\partial ^ \alpha c(x,t) }{\partial x^\alpha} \phi_n(L_1-x) d x =  \\ & \qquad \qquad \qquad \qquad\int_0^{L_1} \phi_n(L_1-x) \dfrac{\partial c(x,t)}{\partial t} d x - \int_0^{L_1} r(x,t) \phi_n(L_1-x) d x.
  \end{split}
\end{equation}
\underline{Step 3:} By applying integration by parts and Lemma \ref{lemma1} to the left-side of equation (\ref{inte_part}), we obtain:
\begin{equation}\label{mf1} 
\begin{split}
 & \nu \int_0^{L_1}  -\dfrac{\partial \phi_n(L_1-x) }{\partial x}  c(x,t) d x + d \int_0^{L_1} \dfrac{\partial ^ \alpha \phi_n(x)  }{\partial x^\alpha} c(L_1-x,t) d  x =\\ &\qquad \qquad \qquad \qquad \int_0^{L_1} \phi_n(L_1-x) \dfrac{\partial c(x,t)}{\partial t} d x - \int_0^{L_1} r(x,t) \phi_n(L_1-x) d x,
  \end{split}
\end{equation}
where the boundary conditions are eliminated by the properties of the used modulating functions.

Finally, the unknown parameters can be estimated by solving the linear system given in (\ref{sys1}).
\end{proof}

\section{Parameter and differentiation order estimation}
In this section, we present a new approach, where we  combine a first order Newton's method and the modulating functions method to simultaneously, estimate $\nu$, $d$, and $\alpha$.

Before presenting the second result of this paper, let us introduce the following proposition.

\begin{proposition}\label{p1}
Let $\{\phi_n(x)\}_{n=1}^N$  be a set of $n^{th}$ order modulating functions defined on the interval $[0,L_1]$ where $2 \leq N$ and $L_1 \leq L$. Then the parameters $d$, $\nu$ can be written in terms of $\alpha$ using Theorem \ref{p1}, and the following linear system estimates the derivatives of $d$ and $\nu$ with respect to $\alpha$:
\begin{eqnarray}\label{d'v'}
 \begin{pmatrix}
  \hat{A_1}		&	\hat{B_1}  \\

\hat{A_2} &  \hat{B_2}\\
\vdots & \vdots
\\
\hat{A_N} & \hat{B_N}

 \end{pmatrix}
 \begin{pmatrix}
 \frac{\partial d(\alpha)}{\partial \alpha}	  \\
  \\
- \frac{\partial \nu(\alpha)}{\partial \alpha}\\
 \end{pmatrix} =
  \begin{pmatrix}
 \hat{C_1}  \\
\hat{C_2}\\
\vdots \\
\hat{C_N}
 \end{pmatrix},
\end{eqnarray}
where
\begin{eqnarray}
 \hat{A_n} =  \int_0^{L_1} \dfrac{\partial ^ \alpha \phi_n(x)  }{\partial x^\alpha} u(L_1-x,t) dx,
 \end{eqnarray}
 \begin{eqnarray}
 \hat{B_n}=\int_0^{L_1}  \dfrac{\partial \phi_n(L_1-x) }{\partial x}  u(x,t) dx,
 \end{eqnarray}
  \begin{eqnarray}
\hat{C_n}=  -d(\alpha) \int_0^{L_1} \frac{\partial}{\partial \alpha}\dfrac{\partial ^ \alpha \phi_n(x)  }{\partial x^\alpha} u(L_1-x,t) dx.
\end{eqnarray}
\end{proposition}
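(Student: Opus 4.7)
The plan is to differentiate the algebraic system from Theorem \ref{theorem1} with respect to $\alpha$, treating $\nu$ and $d$ as implicit functions of $\alpha$ while recognizing that the data-dependent quantities $A_n$ and $C_n$ are $\alpha$-independent.

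First, I would rewrite the $n$-th row of the system \eqref{sys1} as
\begin{equation*}
-\nu(\alpha)\, A_n + d(\alpha)\, B_n(\alpha) = C_n, \qquad n=1,\dots,N,
\end{equation*}
and observe that $A_n$ and $C_n$ depend only on the measured data $c(x,T)$, $\partial_t c(x,T)$ (and the fixed functions $\phi_n,r$), so they carry no $\alpha$-dependence; only $B_n(\alpha)$ does, through the kernel $\partial^{\alpha}\phi_n/\partial x^{\alpha}$. Differentiating each equation in $\alpha$ then gives
\begin{equation*}
-\frac{\partial \nu(\alpha)}{\partial \alpha}\, A_n + \frac{\partial d(\alpha)}{\partial \alpha}\, B_n(\alpha) + d(\alpha)\, \frac{\partial B_n(\alpha)}{\partial \alpha} = 0.
\end{equation*}
Moving the last term to the right-hand side and pulling $\partial_\alpha$ inside the integral defining $B_n$ produces, row by row, exactly
\begin{equation*}
\hat{A}_n \,\frac{\partial d(\alpha)}{\partial \alpha} + \hat{B}_n \left(-\frac{\partial \nu(\alpha)}{\partial \alpha}\right) = \hat{C}_n,
\end{equation*}
with $\hat{A}_n$, $\hat{B}_n$, $\hat{C}_n$ matching the definitions stated in the proposition (identifying the symbol $u$ with the solution $c$ at the measurement time). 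Stacking the $N$ rows yields the claimed linear system \eqref{d'v'}.

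The routine part is simply bookkeeping: recognizing that $\hat A_n$ coincides with $B_n$, that $\hat B_n$ coincides with $A_n$, and that $\hat C_n = -d(\alpha)\,\partial_\alpha B_n(\alpha)$. The main obstacle, and the only analytic point that needs care, is justifying the interchange
\begin{equation*}
\frac{\partial}{\partial \alpha}\int_0^{L_1}\frac{\partial^{\alpha}\phi_n(x)}{\partial x^{\alpha}}\, c(L_1-x,t)\, dx = \int_0^{L_1}\frac{\partial}{\partial \alpha}\frac{\partial^{\alpha}\phi_n(x)}{\partial x^{\alpha}}\, c(L_1-x,t)\, dx.
\end{equation*}
For this I would invoke Leibniz's rule on compact $\alpha$-neighborhoods in $(1,2]$: smoothness of $\phi_n$ and the explicit Riemann--Liouville formula \eqref{R_L_left} let one bound both the kernel $\partial^{\alpha}\phi_n/\partial x^{\alpha}$ and its $\alpha$-derivative (which involves $\log(x-\tau)$ and $\psi(n-\alpha)$ terms) uniformly in $\alpha$, against an integrable function of $x$ multiplied by the bounded measurement $c(L_1-\cdot,T)$. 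I would also note implicitly that $\nu(\alpha)$ and $d(\alpha)$ are well-defined and differentiable in $\alpha$ wherever the $N\times 2$ matrix in \eqref{sys1} has rank two, which is the standing assumption under which Theorem \ref{theorem1} yields an estimate in the first place; under this assumption the derived system \eqref{d'v'} is precisely the linearization needed to run a Newton step on $\alpha$.
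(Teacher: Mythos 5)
Your proof is correct and follows essentially the same route as the paper, whose entire argument is the one-line remark that the system is obtained by differentiating (\ref{mf1}) with respect to $\alpha$. You simply make that explicit: identifying $\hat{A}_n=B_n$, $\hat{B}_n=A_n$, $\hat{C}_n=-d(\alpha)\,\partial_\alpha B_n$, and supplying the Leibniz-rule and rank-two differentiability justifications that the paper leaves implicit.
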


\begin{proof}
This proof can be obtained by differentiating (\ref{mf1}) with respect to $\alpha$.
\end{proof}

In the next subsection, we present  the second result, where we estimate the average velocity, the dispersion coefficient, and the differentiation order for equation (\ref{FADE}), using the measurements given in (\ref{prior1}).

\subsection{Combined Newton's and Modulating Functions Method to Estimate $d$, $\nu$, and $\alpha$}

Order of differentiation is unknown and often challenging to estimate. However, by using the concept of modulating functions, this difficulty is greatly reduced. Due to the nature of the problem it is demanding that we split the solution algorithm into two stages: the first stage solves the pervious problem, and the second stage deals with the optimization problem. Applying the modulating functions greatly simplifies the second stage of the algorithm by reducing the number of unknown parameters in the optimization problem.

Now, we introduce the two stage algorithm to estimate $\nu$, $d$ and $\alpha$.

 \underline{Stage 1:} In this stage, we apply Theorem \ref{theorem1} to re-write $\nu$ and $d$ as functions of the unknown $\alpha$: $\nu(\alpha)$ and $d(\alpha)$.
%

Then, we consider the following equation:
\begin{equation}\label{}
    \dfrac{\partial c(x,t)}{\partial t} = -\nu(\alpha)\ \dfrac{\partial c(x,t)}{\partial x} + d(\alpha)\ \dfrac{\partial ^ \alpha c(x,t) }{\partial x^\alpha} + r(x,t).
\end{equation}
If $\phi_m$ is an $n^{th}$ order modulating function on $[0,L_1]$, then using a similar way of obtaining (\ref{mf1}), we get:
 \begin{equation}\label{dddv} 
 \begin{split}
  &\nu(\alpha) \int_0^{L_1}  -\dfrac{\partial \phi_m(L_1-x) }{\partial x}  c(x,t) d x + d(\alpha) \int_0^{L_1} \dfrac{\partial ^ \alpha \phi_m(x)  }{\partial x^\alpha} c(L_1-x,t) d x  = \\ & \qquad\qquad\qquad\qquad\int_0^{L_1} \phi_m(L_1-x) \dfrac{\partial c(x,t)}{\partial t} d x - \int_0^{L_1} r(x,t) \phi_m(L_1-x) d x.
  \end{split}
\end{equation}

Since $\alpha$ is the only unknown in equation (\ref{dddv}), we can write it as follows:
\begin{eqnarray}
 K(\alpha) = U,
\end{eqnarray}
 where
 \begin{equation}\label{K}
K(\alpha) := d(\alpha) \int_0^{L_1} \dfrac{\partial ^ \alpha \phi_m(L_1-x)  }{\partial x^\alpha} c(x,t)  d x  -   \nu(\alpha) \int_0^{L_1}  \dfrac{\partial \phi_m(L_1-x) }{\partial x}  c(x,t) d x,
\end{equation}
and
\begin{eqnarray}
U:=\int_0^{L_1} [\phi_m(L_1-x) \dfrac{\partial c(x,t)}{\partial t}  - r(x,t) \phi_m(L_1-x) ] d x.
\end{eqnarray}

\underline{Stage 2:}
In this stage, the inverse problem is formulated as the following minimization problem of the $L_2$-norm of the output error with respect to only one unknown $\alpha$:
\begin{eqnarray}\label{cost_1}
J(\alpha) = \parallel K(\alpha) - U \parallel_2^2,
\end{eqnarray}

Then, a first order Newton's type method is used to solve the minimization problem. At each iteration, the order $\alpha$ is updated using:
 \begin{eqnarray}
 \begin{array}{cc}
 K(\alpha_k)-U = \Delta \alpha_k K'(\alpha_k),
 \end{array}
 \end{eqnarray}
where
 \begin{eqnarray} \Delta \alpha_k=\alpha_{k+1}-\alpha_k,
  \end{eqnarray}
and the gradient $K'(\alpha)$ is computed using the next proposition.
\begin{proposition}
Using $\nu(\alpha)$ and $d(\alpha)$ which are the estimations given by Theorem \ref{theorem1} and $K(\alpha)$ is given in (\ref{K}), the gradient $K'(\alpha)$ can be computed as follows:
\begin{equation}
\begin{split}
&K'(\alpha)= d'(\alpha) \int_0^{L_1} \frac{\partial ^ \alpha \phi_m(x)  }{\partial x^\alpha} u(L_1-x,t) dx -   \nu'(\alpha) \int_0^{L_1}  \frac{\partial \phi_m(L_1-x) }{\partial x}  u(x,t) dx \\ &\qquad\qquad\qquad + d(\alpha) \int_0^{L_1} \frac{\partial}{\partial \alpha}\frac{\partial ^ \alpha \phi_m(x)  }{\partial x^\alpha} u(L_1-x,t) dx,
\end{split}
\end{equation}
where $d'(\alpha)$ and $\nu'(\alpha)$ are given by Proposition \ref{p1}.
\end{proposition}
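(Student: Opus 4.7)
The plan is a direct differentiation of $K(\alpha)$ with respect to $\alpha$, using the product rule and commuting $\partial_\alpha$ with the $x$-integration. The derivatives $d'(\alpha)$ and $\nu'(\alpha)$ are then supplied by Proposition \ref{p1}, which was obtained in exactly the same spirit by differentiating (\ref{mf1}) with respect to $\alpha$.

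Concretely, I would first write $K(\alpha)$ in the form used in (\ref{dddv}), where Lemma \ref{lemma1} has already moved the fractional derivative from $c$ onto $\phi_m$. In this form the $\nu$-summand has an integrand independent of $\alpha$, so differentiation simply brings out $-\nu'(\alpha)$ and leaves the integral $\int_0^{L_1} \frac{\partial \phi_m(L_1-x)}{\partial x}\,u(x,t)\,dx$ untouched, producing the middle term of the claimed formula (with $u$ playing the role of $c$). For the $d$-summand $d(\alpha)\int_0^{L_1}\frac{\partial^\alpha \phi_m(x)}{\partial x^\alpha}\,u(L_1-x,t)\,dx$, the product rule gives two pieces: $d'(\alpha)$ times the original integral, which is the first term of the claimed formula, and $d(\alpha)$ times the $\alpha$-derivative of the integral, which — after $\partial_\alpha$ is pushed inside — is the third term.

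The only delicate step is justifying this interchange, i.e.\ differentiation of $D_x^\alpha \phi_m(x)$ under the integral sign. For smooth modulating functions and $\alpha$ kept strictly within $(1,2)$, this follows from the Riemann--Liouville representation (\ref{R_L_left}): the prefactor $1/\Gamma(n-\alpha)$ is smooth in $\alpha$, and the kernel $(x-\tau)^{n-\alpha-1}$ is jointly smooth in $(\alpha,\tau)$ on the relevant range with $\alpha$-derivatives that are uniformly integrable in $\tau$ on compact $\alpha$-intervals. A standard dominated-convergence argument then closes the gap, and the stated formula for $K'(\alpha)$ follows immediately.
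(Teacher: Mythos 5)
Your proposal is correct and follows essentially the same route as the paper, whose proof is simply the one-line remark that the formula follows by differentiating (\ref{K}) with respect to $\alpha$ (with $d'(\alpha)$, $\nu'(\alpha)$ supplied by Proposition \ref{p1}). Your additional justification of differentiating $D_x^\alpha \phi_m$ under the integral sign via the Riemann--Liouville representation and dominated convergence is a welcome detail the paper leaves implicit, but it does not change the argument.
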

\begin{proof}
This proof can be completed by  differentiating (\ref{K}) with respect to $\alpha$.
\end{proof}

	Newton's iteration is a gradient based method and we know that for any gradient based algorithm, most of the computational effort is spent on computing the gradient at each step. However, here, thanks to the modulating functions method we have analytical closed form of gradient. This analytical form is more stable and requires less computational power. Further, using a similar way,  we can efficiently compute higher order derivatives.

For the convenience, we present a description of the proposed algorithm below:
\begin{center}
{\scriptsize
\tikzstyle{decision} = [diamond, draw, fill=white!30]
\tikzstyle{line} = [draw, -stealth, thick]
\tikzstyle{elli}=[draw, ellipse, fill=white!30,minimum height=8mm, text width=5em, text centered]
\tikzstyle{block} = [draw, rectangle, fill=white!30, text width=9em, text centered, minimum height=8mm, node distance=10em]
\tikzstyle{block1} = [draw, diamond, fill=white!30, text width=9em, text centered, minimum height=8mm, node distance=5em]
\begin{tikzpicture}
\node [block] (start) {$d(\alpha_k)$, $\nu(\alpha_k)$};
\node [block, left of=start, xshift=-2em] (process1) {$\alpha_{k+1}=\alpha_k-\frac{K(\alpha_k)-U}{K'(\alpha_k)}$};
\node [elli, above of=start, yshift=3em] (user) {initial value $\alpha_o$};
\node[block, below of=start, yshift=2em](decision1){$\| K(\alpha_k)-U\|^2 < \epsilon$};
\node[elli, below of=decision1, yshift=-2em](decision2){$\alpha, d, \nu$};
\path [line] (user) -- (start);
\path [line] (start) -- (decision1);
\path [line] (decision1) -| node[yshift=0.7em, xshift=3em] {no} (process1);
\path [line] (process1) --(start);
\path [line] (decision1) -- node[yshift=0em, xshift=1em] {yes}(decision2);
\end{tikzpicture}
}
\begin{algorithm}
    \caption{}
  Step1: Start with an initial guess $\alpha_o$.
 \\  Step 2: Compute the corresponding $d(\alpha_k)$, $\nu(\alpha_k)$.
   \\ Step 3: Compute $\| K(\alpha_k)-U\|^2$,

    {\eIf{$\| K(\alpha_k)-U\|^2< \epsilon$} {
 output: $\nu(\alpha_k)$, $d(\alpha_k)$ and $\alpha_k$ }
   {update  $\alpha_{k+1}=\alpha_k-\frac{K(\alpha_k)-U}{K'(\alpha_k)}$ and go back to step 2.} }
\end{algorithm}

\end{center}
\section{Numerical results}

In this section, we present some numerical results to show the efficiency and the robustness of the presented method.

First, we estimate $\nu$ and $d$ by solving the system given in (\ref{sys1}). Then, we use the algorithm given in Section 5 to estimate the parameters $\nu$, $d$, and $\alpha$ on a finite interval from noisy measurements. The value of $T$ is taken at the time where we estimate our parameters. We consider the following  polynomial modulating functions whose fractional derivatives are simple to calculate:
$
\phi_n(x)= x^{N+b+1-n} (L_1-x)^{b+n},
$
where $L_1\le L$, $b=3$, which is the smallest value that satisfies (\ref{condition}) and $n=1,2,\dots,N$, where $N$ is the number of modulating functions. Moreover, we apply the trapezoidal rule to numerically approximate the integrals.

\begin{example}
Let us consider the following space fractional advection-dispersion equation:
	\begin{eqnarray}\label{example_1}
		\begin{array}{lll}
			 \dfrac{\partial c(x,t)}{\partial t} = -\nu \dfrac{\partial c(x,t)}{\partial x} + d \dfrac{\partial ^ \alpha c(x,t) }{\partial x^\alpha} + r(x,t), & 0<x< 9, & t >0,
		\end{array}
 	\end{eqnarray}
	 with the following initial and Dirichlet boundary conditions:
\begin{eqnarray}\label{condition_1}
	\left\{
		\begin{array}{ lll }  	
     			c(x,0)&=&x(9-x),   \\
     			c(0,t)&=&0, \\
     			c(9,t)&=&0,\\
		\end{array}
	\right.
\end{eqnarray}
	where
\begin{equation}
	r(x,t)= \left\{
		\begin{array}{ ll }  	
     			 \cos(-t)[0.2 (9-2x) - (\frac{9\Gamma(2)}{\Gamma (2-\alpha)} x^{1-\alpha}- \frac{\Gamma(3)}{\Gamma (3-\alpha)} x^{2-\alpha})]+ \\  \qquad\qquad\qquad \sin(-t) x(x-9),   & 0< x \le 9,\\
     			0,  &x=0.\\
		\end{array}
	\right.
\end{equation}
 The exact solution of the forward problem is  $c(x,t)=\cos(-t) x(9 -x)$ and the flux is $\displaystyle\frac{\partial c(x,t)}{\partial t} = \sin(-t) x (9 - x)$.
\end{example}

\subsection*{\underline {Estimating $\nu$, $d$ when $\alpha$ is known}}
In this part, we assume that the differentiation order $\alpha$ is known and we estimate $\nu$ and $d$. We set the exact values of the average velocity as $\nu=0.2$, the dispersion coefficient $d=1$, the differentiation order $\alpha=1.8$, the final time $T=1$. In Figure \ref{fig3}, the estimated values of $\nu$, and $d$ when adding a $3\%$ white Gaussian noise to the measurements. Three modulating functions are used and we increase the length of the integration interval $[0,L_1]$. As we can see in Figure \ref{fig4}, the results are satisfactory and the relative error decreases as we increase the length of the integration interval $[0,L_1]$. Although not presented, we would like to note that the results obtained using different number of modulating functions are quite similar to those presented in Figure \ref{fig3}.
\begin{figure}[http]
 \begin{minipage}[b]{0.5\linewidth}
    \centering
    \includegraphics[width=\linewidth]{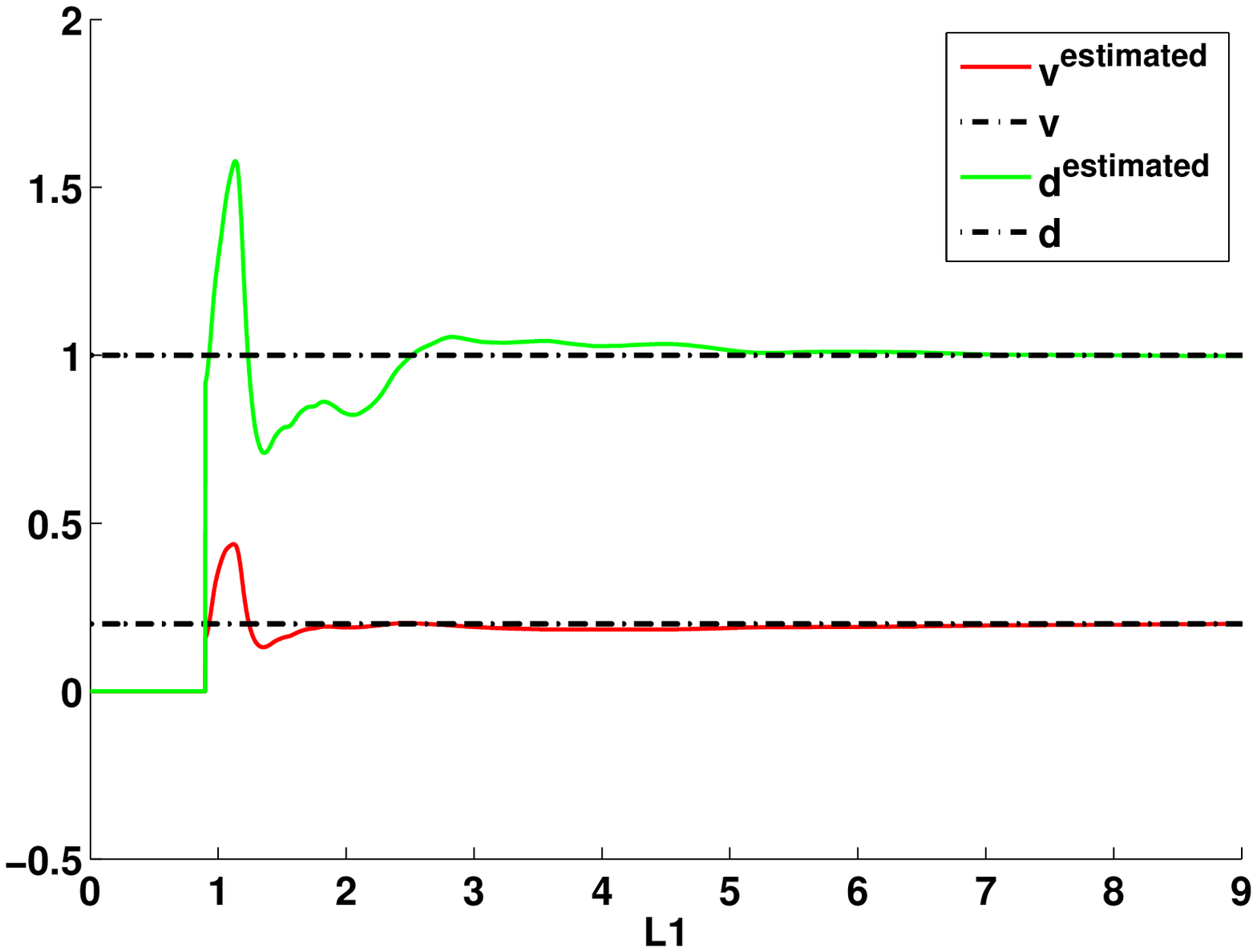}
    \caption{The estimated $d$ and $\nu$ with $3\%$ noise with different values of $L_1$ when $d=1$, $\nu=0.2$.}\label{fig3}
  \end{minipage}
  \hspace{0.5cm}
  \begin{minipage}[b]{0.5\linewidth}
    \centering
    \includegraphics[width=\linewidth]{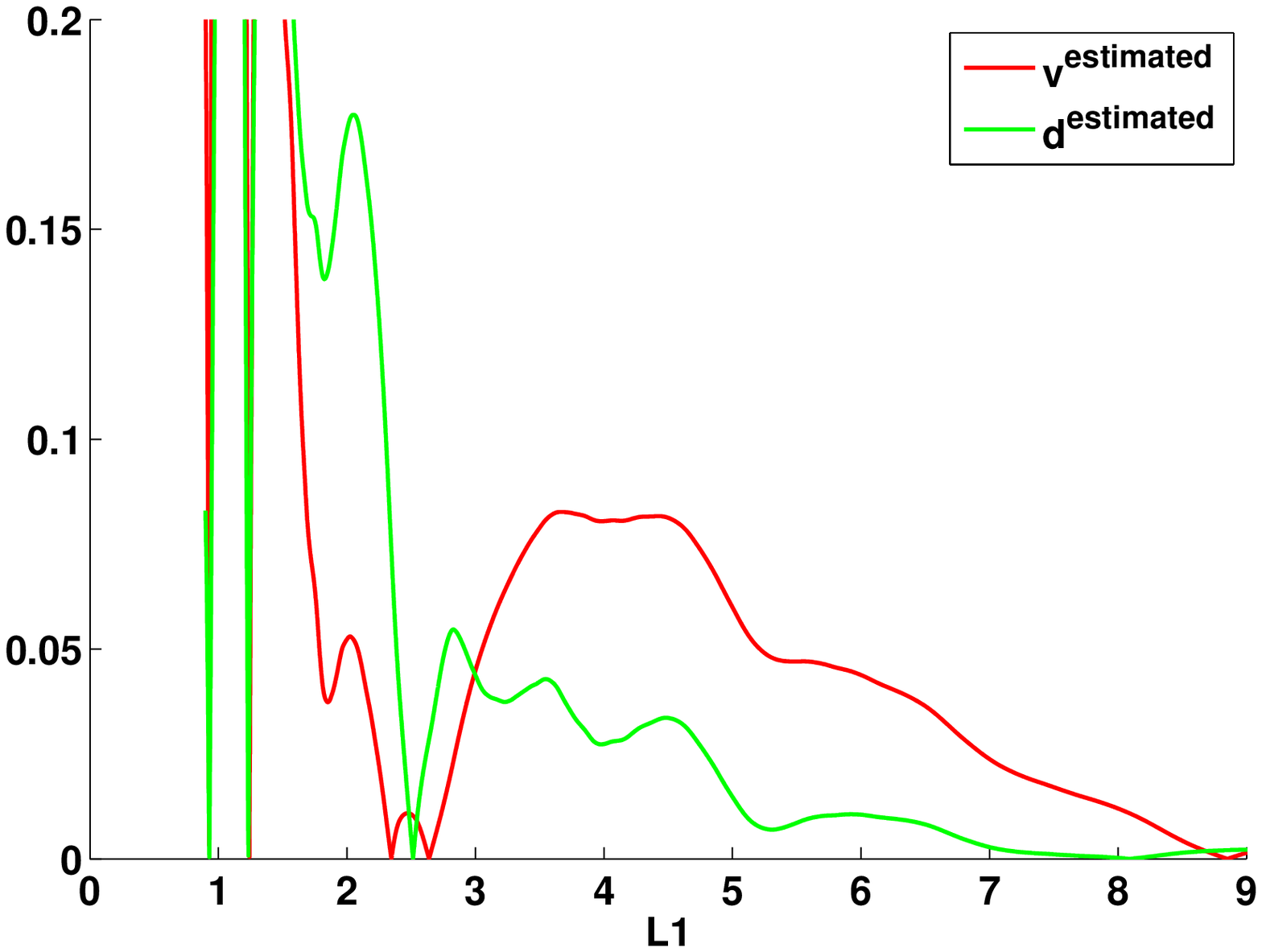}
    \caption{The relative errors of $\nu$ and $d$ with $3\%$ noise for different values of $L_1$.}\label{fig4}
  \end{minipage}
\end{figure}

\subsection*{\underline {Estimating $\nu$, $d$ and $\alpha$}}

In this part, we will use the combined Newton's and modulating functions method to estimate all three parameters simultaneously.  We set the exact values of the average velocity $\nu=0.5$, the dispersion coefficient $d=1$, the differentiation order $\alpha=1.8$, the final time $T=1$, the initial guess $\alpha_o=1.4$. Figure \ref{Histogram_best1} represents the estimated parameters using 7 modulating functions, where the noise level is $2\%$. From this figure, we observe that the numerical results are quite satisfactory,
where the relative error is less than $9\%$ when integrating over the interval $[0,4.5]$ and drops to less than $1\%$ as we increase the length of the integration interval.
\begin{figure}[http]
     \centering
    \includegraphics[width=100mm]{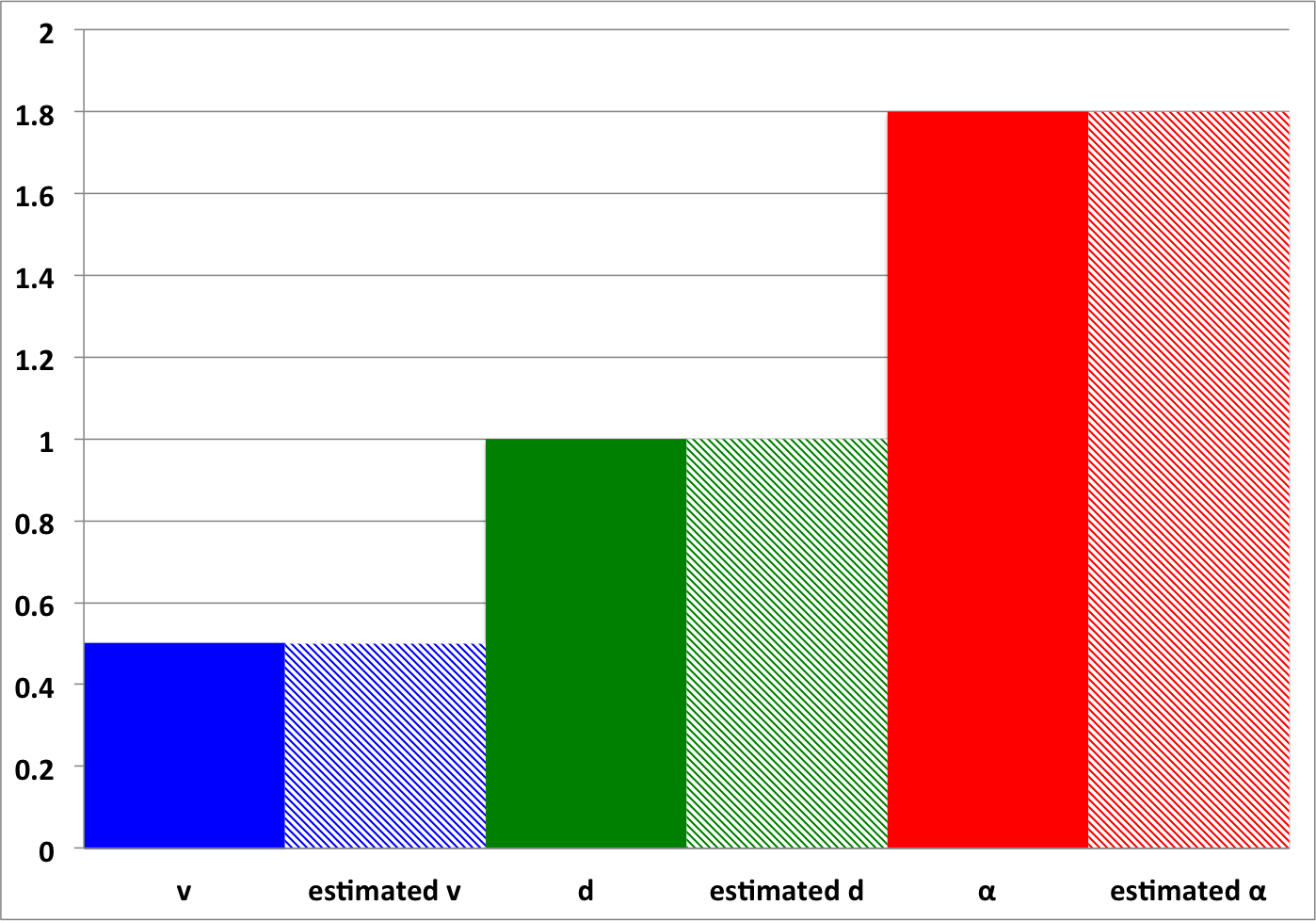}
    \caption{A comparison between the exact and the estimated parameters with 7 modulating functions when adding a $2\%$ noise to the measurements.}\label{Histogram_best1}
\end{figure}

In Figure \ref{Histogram_noise2}, the comparisons under different noise levels $1\%$, $3\%$, $5\%$, $10\%$, between the exact values of the parameters and the estimated values are given. From this figure, it can be seen that the results are stable and remain reasonable even when adding $10\%$ noise to the measurements. Figure \ref{Histogram1} represents the comparison with different number of modulating functions  when adding a $2\%$ white Gaussian noise to the measurements. Even with different number of modulating functions the errors are small and the results are quite satisfactory. This is confirmed in Table \ref{Tab_1}, where the relative errors are between $2 \times 10^{-4}$ and $4.81\times 10^{-3}$. However,
it is noted that the number of the modulating functions has an effect on the stability and the accuracy of the presented algorithm which will be discussed in the next section.

\begin{figure}[http]
     \centering
    \includegraphics[width=130mm]{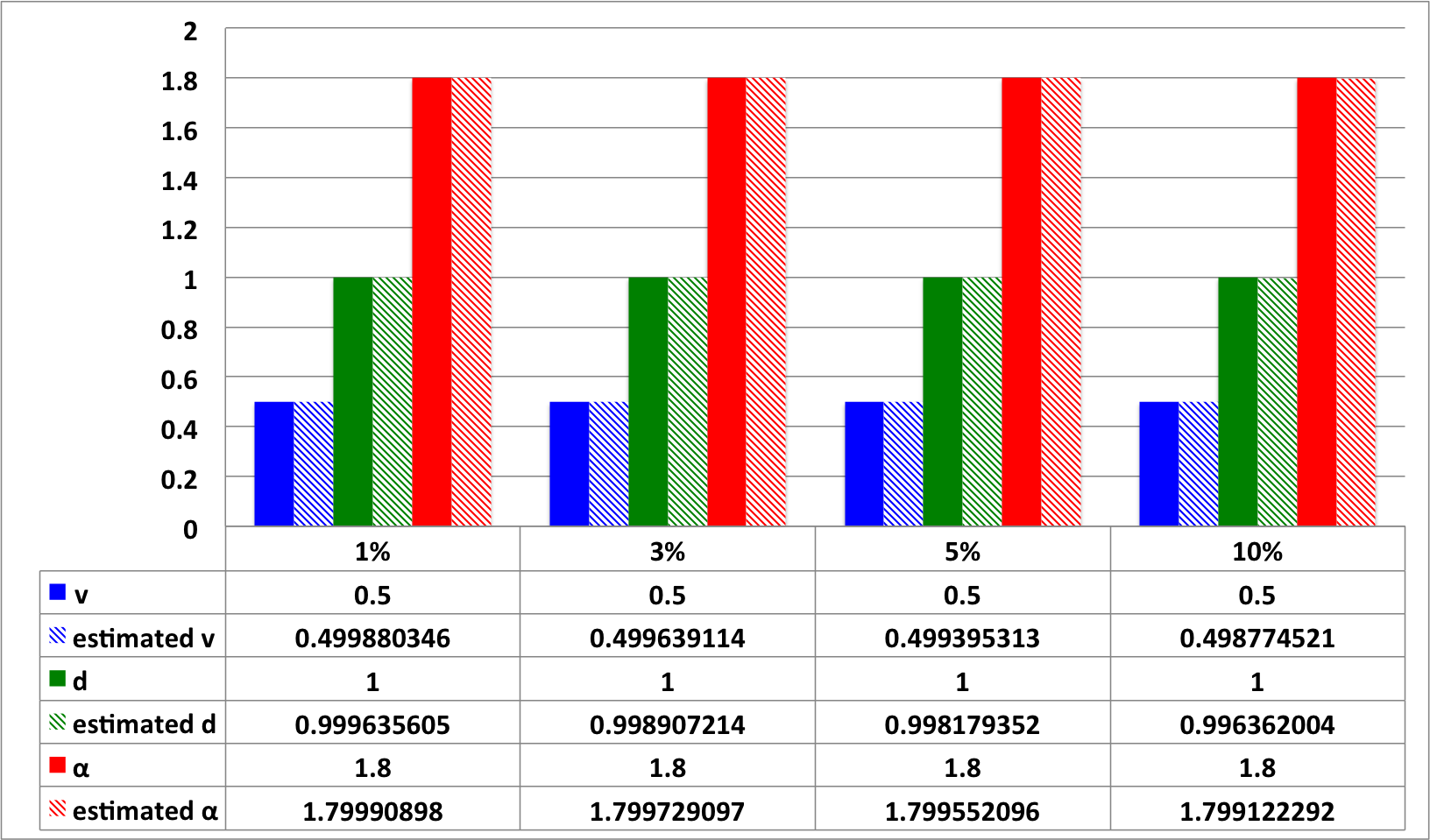}
    \caption{The estimated parameters obtained with 7 modulating functions and different noise levels. }\label{Histogram_noise2}
\end{figure}
\begin{figure}[http]
     \centering
    \includegraphics[width=130mm]{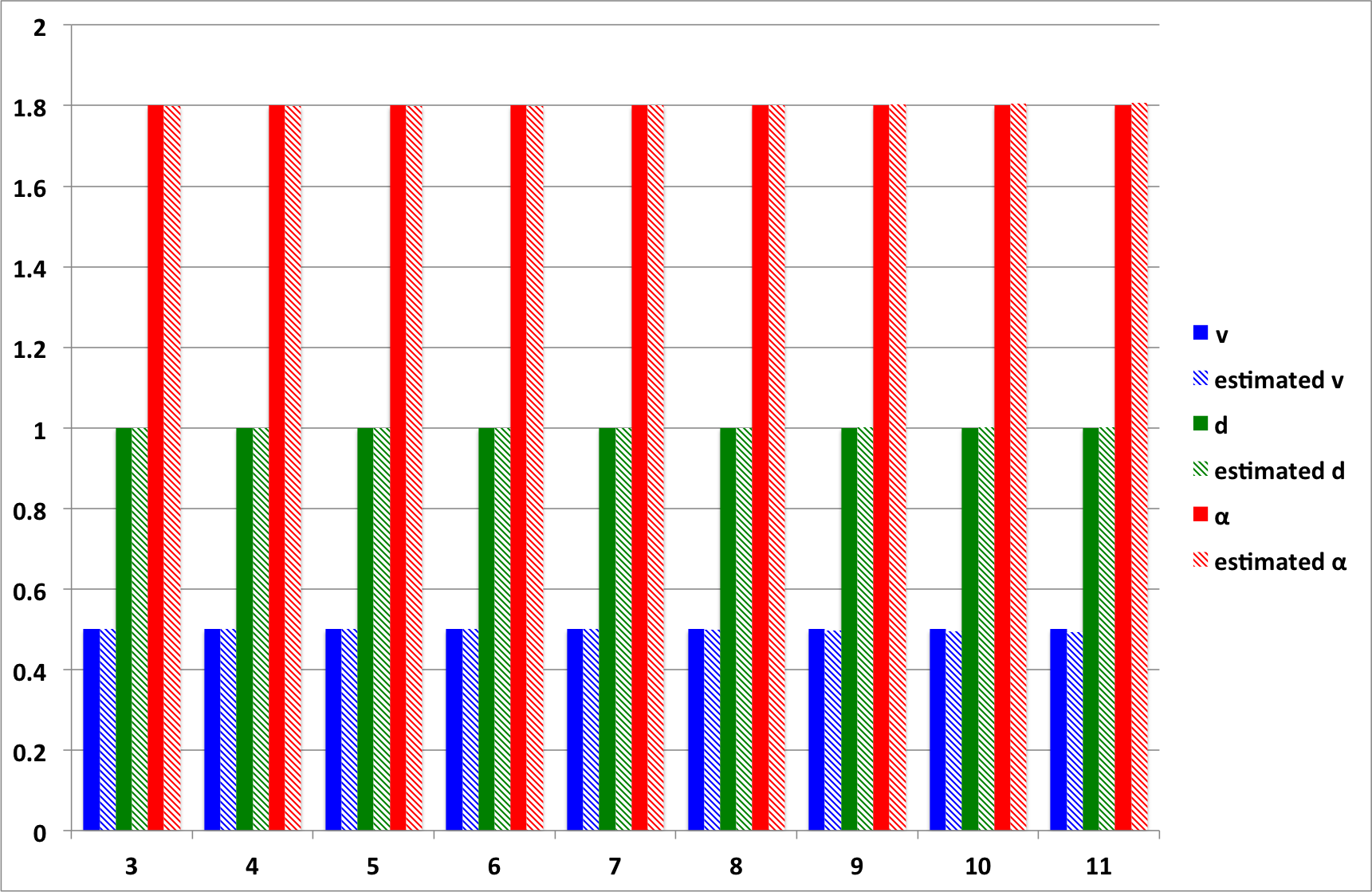}
    \caption{The estimated parameters with different number of modulating functions. }\label{Histogram1}
\end{figure}

 \begin{center}
 \begin{table}[http]
 \caption{$d=1$, $\alpha=1.8$, $\nu=0.5$, and $\Delta x=\frac{1}{3500}$, 2\% noise.}\label{Tab_1}
 \begin{center}
\begin{tabular}{c cc c  c c c c c}

\hline\hline
number of  & Estimated Value  & Relative Error  &  Relative error \\ 
 modulating & ($\nu$, $d$, $\alpha$)   &($\nu$, $d$, $\alpha$)\\
\hline
3 &  (0.5003, 0.9991,  1.7989)  & (5.39E-4, 9.17E-4, 6.37E-4) & 0.7E-3     \\

4 &	(0.5008, 0.9988, 1.7982)	&  (1.69E-3, 1.16E-3, 1.01E-3)&  1.1E-3     	\\

5 &       (0.5009, 0.9989, 1.7982)          & (1.90E-3, 1.12E-3, 1.01E-3)&    1.1E-3            \\

6 &        (0.5006, 0.9992, 1.7988)      &      (1.16E-3, 8.48E-3, 6.53E-3) &   0.7E-3         \\

7 &     (0.4998, 0.9996, 1.79999)       &          (4.43E-4, 4.19E-4,	 3.8E-06) &   0.2E-3    \\

8 &              (0.4986, 1.0001, 1.8016)          &    (2.79E-3, 6.17E-5, 8.73E-4) &   0.1E-2   \\

9 &     (0.4971, 1.0005, 1.8034)            &   (5.77E-3, 4.94E-4, 1.92E-3)&      2.1E-3         \\

10 &    (0.4954, 1.0008, 1.8055)        &     (9.24E-3, 7.89E-4, 3.06E-3) &      3.4E-3      \\

11&    (0.4935, 1.0009,  1.8076)      &      (1.30E-2, 8.83E-4, 4.24E-3)  & 4.8E-3  \\

\hline
\end{tabular}
\end{center}
\end{table}
\end{center}

\section{Discussion}
A Two-Stage algorithm has been used to estimate the parameters and the differentiation order for a fractional differential system.  In the proposed approach, we take advantage of the properties of the modulating functions to over come the difficulties in estimating the differentiation order. The efficiency and the robustness against corrupting noise  with different number of modulating functions have been confirmed by numerical examples. It is noted that the choice and the number of modulating functions can effect the accuracy of the proposed algorithm.
On the one hand, as we can see in Figures \ref{fig5} and \ref{fig6}, for 3 up to 20 modulating functions the relative errors vary, but still less than $3\%$. It is noted that the numerical accuracy becomes worse as the number of modulating functions increases (see Figures \ref{fig7}, \ref{fig8}).
This is because  the stability of the algebraic system given in (\ref{sys1}) depends on the modulating functions, as we increase the number of modulating functions the system of equations increases. Hence, when using polynomial modulating functions with a large system we will lose independency. In all cases, the results are stable and remain reasonable even for up to 20 modulating functions and the presented algorithm works well with reasonable number of modulating functions, which is further confirmed by the errors in Table \ref{Tab_1}.

\begin{figure}[http]
 \begin{minipage}[b]{0.5\linewidth}
    \centering
    \includegraphics[width=\linewidth]{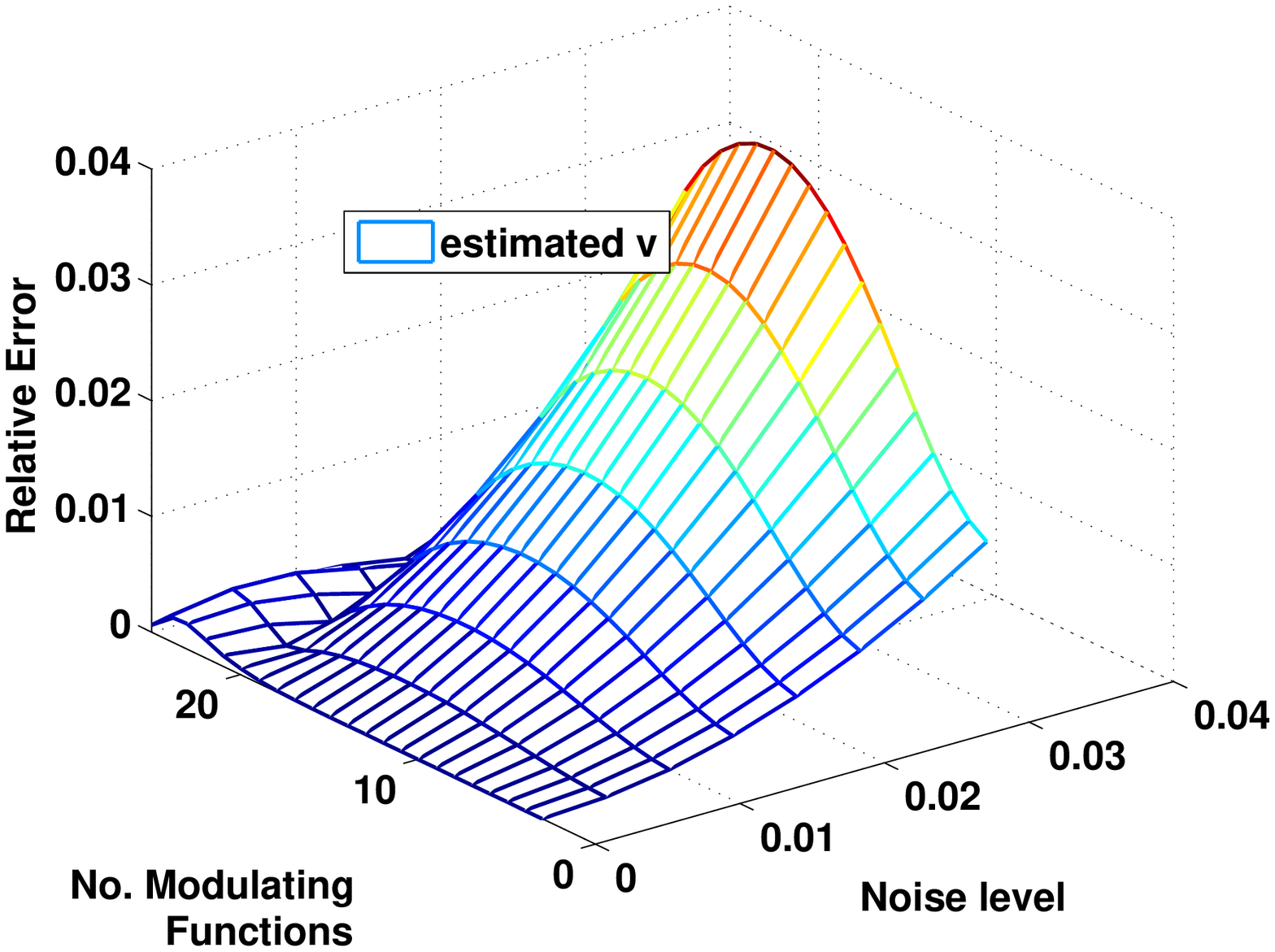}
    \caption{The relative errors of $d$ for different noise levels for up to 25 modulating functions.}
\label{fig5} 
  \end{minipage}
  \hspace{0.5cm}
  \begin{minipage}[b]{0.5\linewidth}
    \centering
    \includegraphics[width=\linewidth]{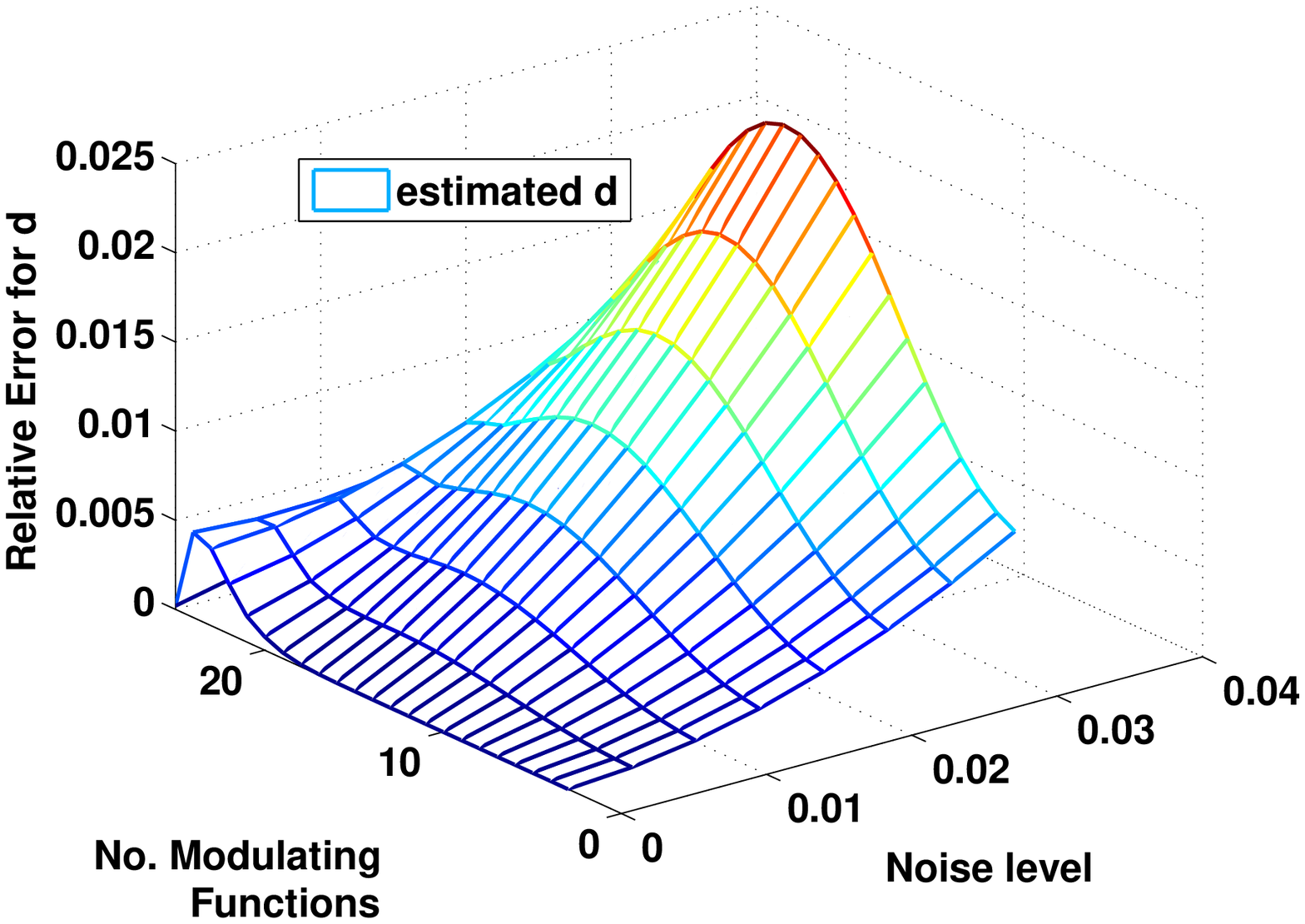}
    \caption{The relative errors of $\nu$ for different noise levels for up to 25 modulating functions..}\label{fig6}
  \end{minipage}
\end{figure}

\begin{figure}[http]
\begin{minipage}[b]{0.5\linewidth}
    \centering
    \includegraphics[width=\linewidth]{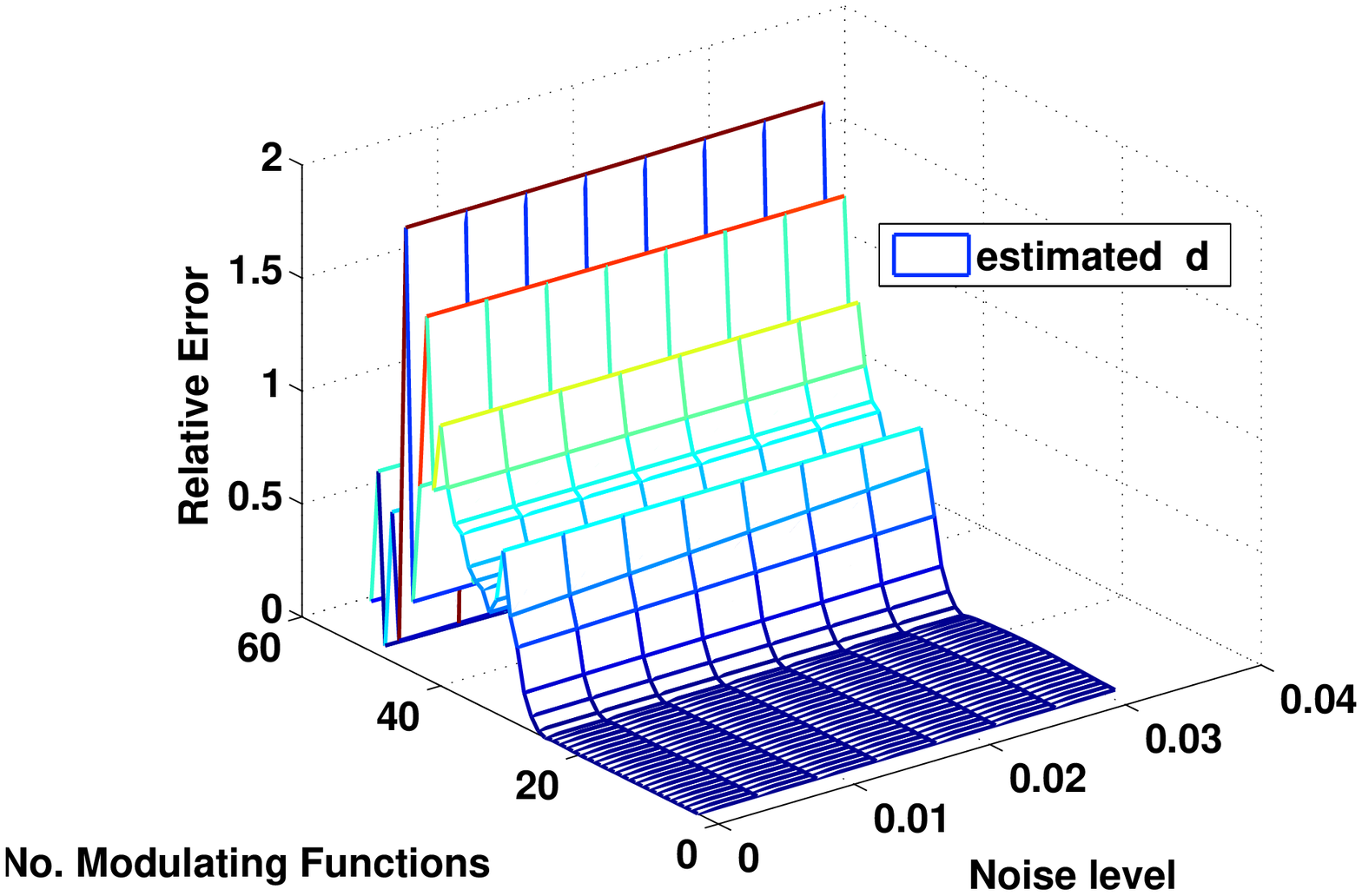}
    \caption{The relative errors of $d$ with different number of modulating functions and different noise levels when $d=1$, $\nu=0.2$.}\label{fig7}
  \end{minipage}
  \hspace{0.5cm}
  \begin{minipage}[b]{0.5\linewidth}
    \centering
    \includegraphics[width=\linewidth]{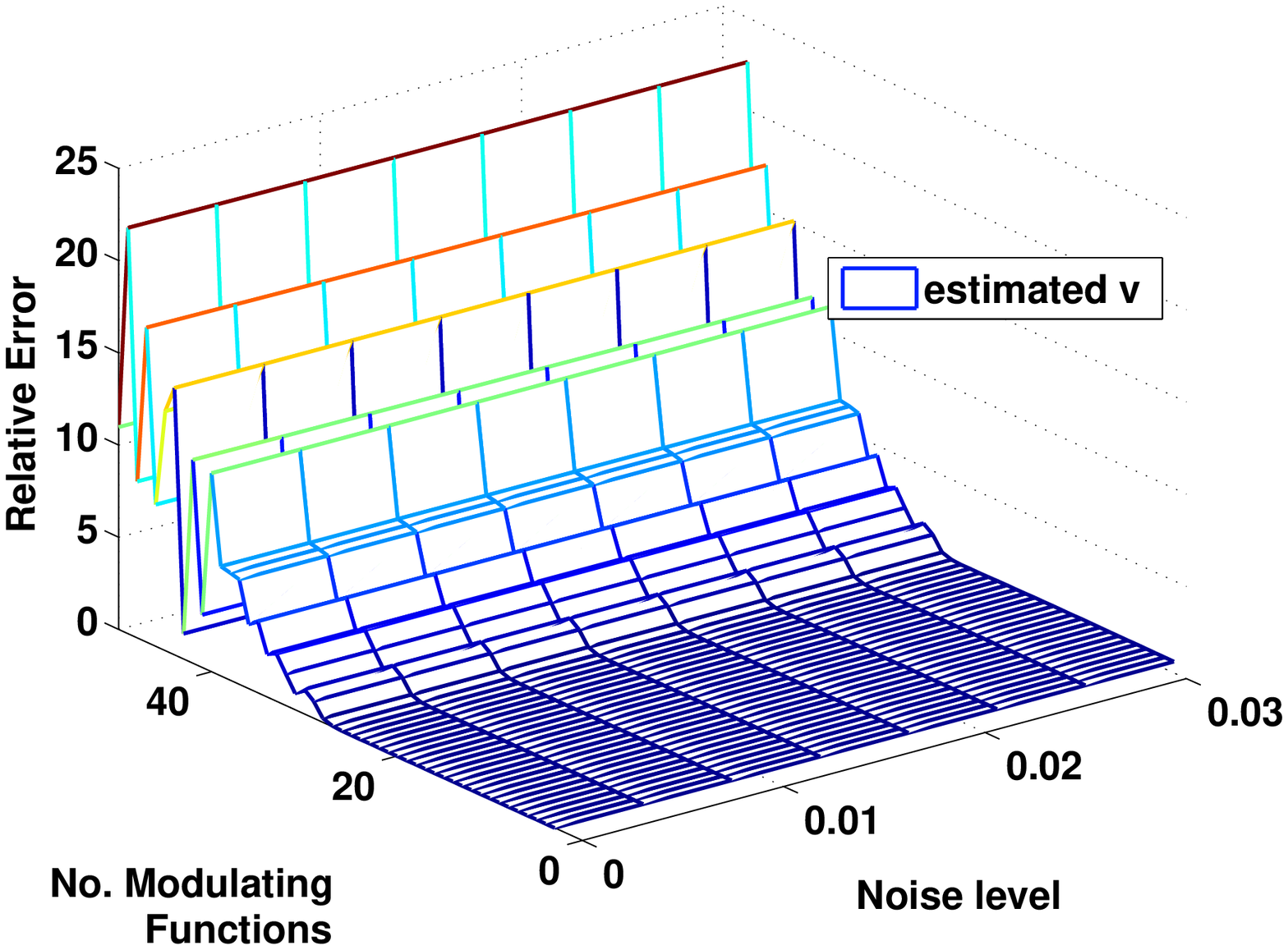}
    \caption{The relative errors of $\nu$ with different number of modulating functions and different noise levels when $d=1$, $\nu=0.2$.}\label{fig8}
  \end{minipage}
\end{figure}

Finally, we briefly comment on the length of the integration interval. In Table \ref{Tab2}, we present the relative errors when estimating  the average velocity and the dispersion coefficient with different integration intervals. We observe that  length of the integration interval has an effect on the accuracy of the performed algorithm. However, the relative errors are still reasonable and less than $1\%$ for $L_1 \ge \frac{2}{3}L$. Furtherer, in Table \ref{Tab3}, we present the relative errors when estimating  all three parameters. It is noted that a larger integration interval is needed. In fact, there is an optimal values for the length of the integration interval and further investigation is needed.

\begin{table}[http]
 \caption{Estimating d and v when the exact values are  $d=1$, $\alpha=1.8$, $\nu=0.2$, and $\Delta x=\frac{1}{1500}$, 3\% noise on both measurements.}\label{Tab2}
\begin{center} \begin{tabular}{c c c ccccc c c} \hline\hline
Number of&  &Relative Errors  & & \\ 
modulating functions&$L_1=5$  &$L_1=6$  &$L_1=7$  &$L_1=8$ &$L_1=8.5$ &$L_1=9$  \\
\hline
3& 8.55E-3  & 2.61E-3 & 1.63E-3  & 1.29E-3 & 1.60E-3 & 2.19E-3\\
4&  7.84E-3  & 2.46E-3  &1.23E-3 & 8.28E-4& 1.08E-3 &1.63E-3  \\
5& 6.77 E-3  & 2.28E-3  &5.86E-4  & 1.17E-4 & 2.79E-4 & 7.68E-4 \\
6& 5.13E-3 & 2.05E-3  &4.33E-4  & 8.66E-4 & 8.38E-4 & 4.03E-4 \\
7& 2.73E-3 & 1.73E-3  & 1.91E-3 &2.12E-3&2.27E-3&1.87E-3\\
\hline \end{tabular} \end{center} \end{table}

\begin{table}[http]
 \caption{When estimating all parameters $d=1$, $\alpha=1.8$, $\nu=0.2$, and $\Delta x=\frac{1}{3500}$, 2\% noise on both measurements.}\label{Tab3}
 \begin{center}
\begin{tabular}{c ccc c c c c c}

\hline\hline
No.  &  &Relative Error   & \\ 
modulating functions &  $L=7.5$ &$L_1=8$&$L_1=9$ \\
\hline

3   & 7.42E-3    &3.64E-3  & 0.442E-3 \\
4   & 6.88E-3 & 2.91E-3 &0.782E-3    \\
5   & 5.82E-3&	2.06E-3 &0.849E-3    \\
6 & 4.39E-3 &	1.19E-3 & 0.572E-3    \\
7& 2.70E-3  &    4.02E-3 &0.248E-3\\

\hline
\end{tabular}
\end{center}
\end{table}

 \section{Conclusion}
In this paper, we have presented a new approach for estimating the parameters  in a space fractional advection dispersion equation. First, we estimated the dispersion coefficient and the average velocity by applying the modulating functions method which transferred the parameter identification problem into solving a system of algebraic equations. Then, the estimations of the unknown parameters have been given by integral formula which are robust against high frequency noises. Then, the modulating functions method combined  with a Newton's type method were used to estimate  the average velocity, the dispersion coefficient and the differentiation order simultaneously,  where the first order derivatives with respect to $\alpha$ of the dispersion coefficient and the average velocity have also been given.  The three parameters optimization problem was transferred into a one parameter optimization problem. Furthermore, numerical simulations have been performed and the results showed the effectiveness of the proposed algorithm. In these numerical examples, we have chosen polynomial modulating functions, which is easy to compute their fractional derivatives. It is mentioned that the choice of the type of modulating functions influences the stability of the linear system and further investigation is needed. In our future work, we aim to generalize the presented method to estimate varying velocity and dispersion coefficients.

\bibliographystyle{plain}



\end{document}